 \tikzset{vertex/.style={fill,circle,inner sep=1.0pt}}
\newtheorem{theorem}{Theorem}[section]
\newtheorem{proposition}[theorem]{Proposition}
\newtheorem{corollary}[theorem]{Corollary}
\newtheorem{lemma}[theorem]{Lemma}
\theoremstyle{definition}
\newtheorem{definition}[theorem]{Definition}
\newtheorem{remark}[theorem]{Remark}
\newtheorem{example}[theorem]{Example}
\newcommand{\lex}[2]{{#1}[{#2}]}
\newcommand{\neib}[2]{\overline{N}_{#1} (#2)}
\newcommand{\sphere}[1]{S^{#1}}
\title[]{Independence complex of the lexicographic product of a forest}
\author[]{Kengo Okura}
\address{Osaka Prefecture University, 1-1 Gakuen-cho, Naka-ku, Sakai, Osaka 599-8531, Japan}
\email{okura.kengo.k35@kyoto-u.jp}
\keywords{independence complex, lexicographic product, homotopy type, grid graph}
\subjclass[2020]{05C69, 05C76, 55P15}
\begin{document}

\begin{abstract}
We study the independence complex of the lexicographic product $\lex{G}{H}$ of a forest $G$ and a graph $H$. 
We prove that for a forest $G$ which is not dominated by a single vertex, if the independence complex of $H$ is homotopy equivalent to a wedge sum of spheres, then so is the independence complex of $\lex{G}{H}$. 
We offer two examples of explicit calculations. As the first example, we determine the homotopy type of the independence complex of $\lex{L_m}{H}$, where $L_m$ is the tree on $m$ vertices with no branches, for any positive integer $m$ when the independence complex of $H$ is homotopy equivalent to a wedge sum of $n$ copies of $d$-dimensional sphere. 
As the second one, for a forest $G$ and a complete graph $K$, we describe the homological connectivity of the independence complex of $\lex{G}{K}$ by the independent domination number of $G$.
\end{abstract}

\maketitle

\section{Introduction}
\label{introduction}
In this paper, a {\it graph} $G$ always means a finite undirected graph with no multiple edges and loops. Its vertex set and edge set are denoted by $V(G)$ and $E(G)$, respectively.
A subset $\sigma$ of $V(G)$ is an {\it independent set} if any two vertices of $\sigma$ are not adjacent. The independent sets of $G$ are closed under taking subset, so they form an abstract simplicial complex. We call this abstract simplicial complex the {\it independence complex} of $G$ and denote by $I(G)$. In the rest of this paper, $I(G)$ denotes a geometric realization of $I(G)$ unless otherwise noted.

Independence complexes of graphs are no less important than other simplicial complexes constructed from graphs and have been studied in many contexts. 
In particular, the independence complexes of square grid graphs are studied by Thapper \cite{Thapper08}, Iriye \cite{Iriye12} and many other researchers. It is conjectured by Iriye \cite[Conjecture 1.8]{Iriye12} that the independence complex of cylindrical square grid graph is always homotopy equivalent to a wedge sum of spheres.
{\it Discrete Morse theory} , introduced by Forman \cite{Forman98} and reformulated by Chari \cite{Chari00}, is one of the effective methods for determining the homotopy type of independence complex. Bousquet-M{\'{e}}lou, Linusson and Nevo \cite{BousquetmelouLinussonNevo08} and Thapper \cite{Thapper08} studied the independence complexes of grid graphs by performing discrete Morse theory as a combinatorial algorithm called {\it matching tree}. However, it is hard to distinguish two complexes which has the same number of cells in each dimension only by discrete Morse theory. This is precisely the situation which we have to deal with in this paper. We need topological approaches in case that discrete Morse theory is not available. For example, it is effective to represent an independence complex of a graph as a union of independence complexes of subgraphs, as in Engstr{\"{o}}m \cite{Engstrom09}, Adamaszek \cite{Adamaszek12} and Barmak \cite{Barmak13}. 

Let $L_m$ be a tree on $m$ vertices with no branches, and $C_n$ be a cycle on $n$ vertices ($n \geq 3$). Namely
\begin{align*}
&V(L_m)=\{1,2,\ldots, m\}, & &E(L_m) = \{ij  \ |\ |i-j|=1 \} , \\
&V(C_n) = \{1,2, \ldots, n \}, & &E(C_n) = E(L_n) \cup \{n1 \}.
\end{align*}
Related to the above previous researches, we focus on the fact that the cylindrical square grid graphs are obtained from $L_m$ and $C_n$ by a certain ``product'' construction. As Harary \cite{Harary69} mentioned, there are various ways to construct a graph structure on $V(G_1) \times V(G_2)$ for given two graphs $G_1$ and $G_2$. A cylindrical square grid graph is the {\it Cartesian product} of $L_m$ and $C_n$ for some $m, n$. In this paper, we are interested in the {\it lexicographic product} of two graphs, which is defined as follows.
\begin{definition}
Let $G, H$ be graphs. The {\it lexicographic product} $\lex{G}{H}$ is a graph defined by
\begin{align*}
&V(\lex{G}{H}) = V(G) \times V(H) ,\\
&E(\lex{G}{H}) = \left\{ (u_1, v_1)(u_2, v_2) \ \middle| \ 
\begin{aligned}
&u_1 u_2 \in E(G) \\
&\text{ or} \\
&u_1=u_2, v_1 v_2 \in E(H) 
\end{aligned}
\right\}.
\end{align*}
\end{definition}
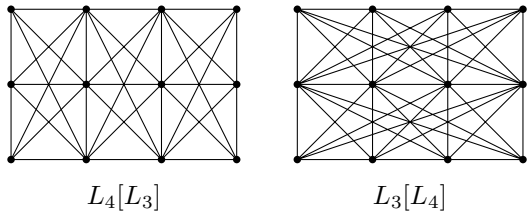
\begin{figure}[htb]
\begin{tabular}{ccc}
\begin{tikzpicture}
\draw (1,1) grid (4,3);
\draw (1,1)--(3,3) (1,2)--(2,3) (2,1)--(4,3) (3,1)--(4,2) (1,3)--(3,1) (1,2)--(2,1) (2,3)--(4,1) (3,3)--(4,2);
\draw (1,1)--(2,3) (1,3)--(2,1) (2,1)--(3,3) (2,3)--(3,1) (3,1)--(4,3) (3,3)--(4,1);
\foreach \x in {1,2,3,4} {\foreach \y in {1,2,3 } {\node at (\x, \y) [vertex] {};};}
\node at (2.5,0.5) {$\lex{L_4}{L_3}$};
\end{tikzpicture}
& &
\begin{tikzpicture}
\draw (1,1) grid (4,3);
\draw (1,1)--(3,3) (1,2)--(2,3) (2,1)--(4,3) (3,1)--(4,2) (1,3)--(3,1) (1,2)--(2,1) (2,3)--(4,1) (3,3)--(4,2);
\draw (1,1)--(3,2) (1,1)--(4,2) (2,1)--(4,2) (3,1)--(1,2) (4,1)--(1,2) (4,1)--(2,2) (1,2)--(3,3) (1,2)--(4,3) (2,2)--(4,3) (3,2)--(1,3) (4,2)--(1,3) (4,2)--(2,3);
\foreach \x in {1,2,3,4} {\foreach \y in {1,2,3 } {\node at (\x, \y) [vertex] {};};}
\node at (2.5,0.5) {$\lex{L_3}{L_4}$};
\end{tikzpicture}
\end{tabular}
\caption{Lexicographic products $\lex{L_4}{L_3}$ and $\lex{L_3}{L_4}$.}
\end{figure}

\noindent
Harary \cite{Harary69} called this construction the {\it composition}. A lexicographic product $\lex{G}{H}$ can be regarded to have $|V(G)|$ pseudo-vertices. Each of them is isomorphic to $H$ and two pseudo-vertices are ``adjacent'' if the corresponding vertices of $G$ are adjacent. Graph invariants of lexicographic product have been investigated by, for example, Geller and Stahl \cite{GellerStahl75}. Independence complexes of lexicographic products are studied by Vander Meulen and Van Tuyl \cite{VandermeulenVantuyl17} from combinatorial point of view.

We try to reveal in what condition the independence complex of a lexicographic product is homotopy equivalent to a wedge sum of spheres. The main result of this paper is the following theorem.
\begin{theorem}
\label{forest}
Let $G$ be a forest and $H$ be a graph. We call $G$ a {\it star} if there exists $v \in V(G)$ such that $uv \in E(G)$ for any $u \in V(G) \setminus \{v\}$.
Suppose that $I(H)$ is homotopy equivalent to a wedge sum of spheres. Then, we have the followings. 
\begin{enumerate}
\item If $G$ is a star on at least $2$ vertices, then $I(\lex{G}{H})$ is homotopy equivalent to a disjoint union of two wedge sums of spheres. 
\item If $G$ is not a star, then $I(\lex{G}{H})$ is homotopy equivalent to a wedge sum of spheres.
\end{enumerate}
\end{theorem}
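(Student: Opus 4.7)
The plan is to prove Case~(1) by direct decomposition and Case~(2) by induction on $|V(G)|$.

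For Case~(1), let $c \in V(G)$ be the dominating vertex, so $G$ has $k = |V(G)|-1 \geq 1$ leaves. In $\lex{G}{H}$ every vertex of $H_c$ is adjacent to every vertex of every leaf-copy, so every independent set of $\lex{G}{H}$ either lies entirely in $V(H_c)$ or entirely misses $V(H_c)$. The first family realises $I(H)$; the second, since distinct leaves of $G$ are pairwise non-adjacent, realises the $k$-fold join $I(H)^{\ast k}$; and these two subcomplexes share only the empty simplex. Geometrically, $I(\lex{G}{H}) = I(H) \sqcup I(H)^{\ast k}$, and using $X \ast Y \simeq \Sigma(X \wedge Y)$ together with distributivity of smash over wedge, both summands are wedges of spheres.

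For Case~(2) I induct on $|V(G)|$. If $G = G_1 \sqcup G_2$ with both parts nonempty, then $I(\lex{G}{H}) = I(\lex{G_1}{H}) \ast I(\lex{G_2}{H})$, and by the inductive hypothesis each factor is a wedge of spheres or a disjoint union of two such. The identity $(X \sqcup Y) \ast Z \simeq (X \ast Z) \vee (Y \ast Z) \vee \Sigma Z$, valid for nonempty $X,Y,Z$ because the inclusions $Z \hookrightarrow X \ast Z$ and $Z \hookrightarrow Y \ast Z$ are null-homotopic, then shows the join is again a wedge of spheres.

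If $G$ is connected and not a star, then $G$ is a tree of diameter at least $3$, so $|V(G)| \geq 4$. Fix a leaf $v$ with neighbour $u$, and set $G'' = G \setminus \{u, v\}$ and $G''' = G \setminus N_G[u]$; since $G$ is not a star, $G''' \neq \emptyset$, and $G'' \neq \emptyset$ as well. The key step is a Mayer--Vietoris decomposition using the subcomplex $A$ of independent sets disjoint from $V(H_u)$ and the subcomplex $B$ of independent sets contained in $V(H_u) \cup V(\lex{G'''}{H})$. Every independent set meeting $V(H_u)$ must avoid $V(H_{u'})$ for all $u' \in N_G(u)$, so $A \cup B = I(\lex{G}{H})$. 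Since $v$ becomes isolated once $u$ is removed, $A = I(\lex{G''}{H}) \ast I(H)$; a direct check gives $B = I(H) \ast I(\lex{G'''}{H})$ and $A \cap B = I(\lex{G'''}{H})$. Both inclusions of the intersection into $A$ and into $B$ are base-into-join inclusions and hence null-homotopic, and $A \cap B \neq \emptyset$, so the pushout $A \cup_{A \cap B} B$ simplifies to
\begin{equation*}
I(\lex{G}{H}) \simeq (I(\lex{G''}{H}) \ast I(H)) \vee (I(H) \ast I(\lex{G'''}{H})) \vee \Sigma I(\lex{G'''}{H}).
\end{equation*}
Applying induction to $G''$ and $G'''$ together with the join and suspension identities from the disconnected case then shows that each summand, hence all of $I(\lex{G}{H})$, is a wedge of spheres.

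The main obstacle is setting up and justifying the displayed pushout decomposition. Identifying $A$, $B$, and $A \cap B$ as independence complexes of the correct graphs requires careful accounting of which pseudo-vertices become inaccessible once $V(H_u)$ is touched, and this crucially uses that $v$ is a leaf so that $H_v$ separates from $\lex{G''}{H}$ in $\lex{G \setminus u}{H}$. Collapsing the pushout to $A \vee B \vee \Sigma(A \cap B)$ relies on the standard fact that a homotopy pushout with both legs null-homotopic and nonempty intersection collapses in this way. Additional bookkeeping is needed to iterate the join and suspension formulas when $I(\lex{G''}{H})$ or $I(\lex{G'''}{H})$ itself falls under Case~(1), i.e.\ when $G''$ or $G'''$ happens to be a star, so that the two parts of the inductive statement remain consistent under the decompositions used.
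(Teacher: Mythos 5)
Your proposal is correct and follows essentially the same route as the paper: your pushout decomposition $A \cup_{A\cap B} B$ with both inclusions null-homotopic is exactly the paper's Theorem \ref{splitting} (built from Lemma \ref{ind pushout} and Lemma \ref{mapping cylinder}), and the induction on $|V(G)|$ with the join/suspension identities for disjoint unions matches Lemmas \ref{disjoint suspension} and \ref{sphere join}. The only cosmetic difference is that you treat disconnected forests separately via $I(\lex{(G_1 \sqcup G_2)}{H}) = I(\lex{G_1}{H}) * I(\lex{G_2}{H})$, whereas the paper applies its splitting theorem uniformly to any non-star forest with an edge and keeps only the edgeless case as a separate base case.
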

\noindent
For example, Kozlov \cite[Proposition 5.2]{Kozlov99} proved that $I(C_n)$ is homotopy equivalent to a wedge sum of spheres. So, it follows from Theorem \ref{forest} that $I(\lex{L_m}{C_n})$ with $m \geq 4$ is homotopy equivalent to a wedge sum of spheres. 
Remark that $\lex{L_m}{C_n}$ contains a cylindrical square grid graph as a subgraph which is obtained from $\lex{L_m}{C_n}$ by removing edges.
Furthermore, we determined the homotopy type of $I(\lex{L_m}{H})$ for any $m \geq 1$ and a graph $H$ such that $I(H)$ is homotopy equivalent to $n$ copies of $k$-dimensional spheres. 
We denote the $d$-dimensional sphere by $\sphere{d}$ and a wedge sum of $n$ copies of CW complex $X$ by $\bigvee_{n} X$.
\begin{theorem}
\label{line theorem}
Let $H$ be a graph such that $I(H) \simeq {\bigvee}_n \sphere{k}$ with $n \geq 1$, $k \geq 0$. Then we have
\begin{align*}
&I(\lex{L_m}{H})  \\
\simeq &\left\{
\begin{aligned}
&{\bigvee}_n \sphere{k} & &(m=1), \\
&\left( {\bigvee}_n \sphere{k} \right) \sqcup \left( {\bigvee}_n \sphere{k} \right) & &(m=2), \\
&\left( {\bigvee}_n \sphere{k} \right) \sqcup  \left( {\bigvee}_{n^2} \sphere{2k+1} \right)& &(m=3), \\
&\bigvee_{0 \leq p \leq \frac{m+1}{2}} \left( \bigvee_{pk -1 +\max \left\{p, \frac{m}{3} \right\} \leq d \leq pk+\frac{m+p-2}{3}}
\left( {\bigvee}_{N_{m,n,k}(p,d)} \sphere{d} \right) \right) & &(m \geq 4), 
\end{aligned} \right.
\end{align*}
where
\begin{align*}
N_{m,n,k}(p,d) &= n^p \binom{d-pk+1}{p} \binom{p+1}{3(d-pk+1)-m} .
\end{align*}
\end{theorem}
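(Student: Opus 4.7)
We will proceed case-by-case: the small cases $m \in \{1, 2, 3\}$ can be handled directly from the structure of $L_m$, while for $m \geq 4$ Theorem \ref{forest}(2) reduces the problem to a Betti-number count, which we will carry out via a recursive pushout decomposition.

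For $m = 1$, we have $\lex{L_1}{H} = H$. For $m = 2$, the two vertices of $L_2$ are adjacent, so the two copies of $H$ are completely joined in $\lex{L_2}{H}$, and any independent set lies in one copy, giving $I(\lex{L_2}{H}) = I(H) \sqcup I(H)$. For $m = 3$, the graph $L_3$ is a star with center vertex $2$, so Theorem \ref{forest}(1) yields a disjoint union of two wedges of spheres; the two components can be identified concretely as (i) the independent sets supported on copy $2$ (isomorphic to $I(H)$), and (ii) the independent sets in copies $1$ and $3$, where the induced subgraph is $H \sqcup H$ (since $1,3$ are non-adjacent in $L_3$). The latter is the join $I(H) * I(H) \simeq (\bigvee_n \sphere{k}) * (\bigvee_n \sphere{k}) \simeq \bigvee_{n^2} \sphere{2k+1}$, using $\sphere{a} * \sphere{b} \simeq \sphere{a+b+1}$ and distributivity of join over wedge.

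For $m \geq 4$, Theorem \ref{forest}(2) tells us that $X_m := I(\lex{L_m}{H})$ is a wedge of spheres, so it will suffice to compute its reduced Betti numbers. The strategy is to set up a recurrence via a pushout decomposition. Since copies $m-1$ and $m$ are completely joined in $\lex{L_m}{H}$, every independent set avoids at least one of them. Letting $A$ be the induced subcomplex of simplices avoiding copy $m$ and $B$ the one avoiding copy $m-1$, we have $A = X_{m-1}$ and $B = X_{m-2} * I(H)$ (since removing vertex $m-1$ from $L_m$ yields $L_{m-2} \sqcup \{m\}$, and the independence complex of a disjoint union of graphs is a join), with $A \cup B = X_m$ and $A \cap B = X_{m-2}$. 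This gives the pushout $X_m = X_{m-1} \cup_{X_{m-2}} (X_{m-2} * I(H))$.

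The inclusion $X_{m-2} \hookrightarrow X_{m-2} * I(H)$ factors through the cone $X_{m-2} * \{v\} = C X_{m-2}$ for any vertex $v \in I(H)$, so it is null-homotopic. Because both pushout maps are subcomplex cofibrations, we obtain
\[
X_m \simeq (X_{m-1}/X_{m-2}) \vee (X_{m-2} * I(H)).
\]
Applying the same pushout to $X_{m-1}$ (now on copies $m-2$ and $m-1$) gives $X_{m-1} = X_{m-2} \cup_{X_{m-3}} (X_{m-3} * I(H))$, whence $X_{m-1}/X_{m-2} \simeq (X_{m-3} * I(H))/X_{m-3}$. The standard fact that the cofiber of a null-homotopic cofibration splits as a wedge then gives $X_{m-1}/X_{m-2} \simeq (X_{m-3} * I(H)) \vee \Sigma X_{m-3}$. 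Combined with the identity $Y * I(H) \simeq \bigvee_n \Sigma^{k+1} Y$ (coming from $I(H) \simeq \bigvee_n \sphere{k}$), this produces the Betti-number recurrence
\[
\tilde b_d(X_m) = n\, \tilde b_{d-k-1}(X_{m-2}) + n\, \tilde b_{d-k-1}(X_{m-3}) + \tilde b_{d-1}(X_{m-3}).
\]

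The final step will be to verify that $\sum_p N_{m,n,k}(p,d)$ satisfies this recurrence together with the initial data from the already-established cases $m \leq 3$. Setting $q := d - pk + 1$ and $r := 3q - m$ so that $N_{m,n,k}(p,d) = n^p \binom{q}{p} \binom{p+1}{r}$, the recurrence for $N$ reduces at the level of individual terms to the binomial identity
\[
\binom{q}{p}\binom{p+1}{r} = \binom{q-1}{p-1}\binom{p}{r-1} + \binom{q-1}{p-1}\binom{p}{r} + \binom{q-1}{p}\binom{p+1}{r},
\]
which follows from two applications of Pascal's rule. The hard part will be the careful accounting of the base cases $m = 4, 5$ against the small-$m$ data for $X_1, X_2, X_3$, together with tracking the $p = 0$ contributions (which are nonzero exactly when $3(d+1) - m \in \{0, 1\}$) at the boundaries of the dimension range $pk - 1 + \max\{p, m/3\} \leq d \leq pk + (m+p-2)/3$. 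The topological content is contained in the pushout analysis above; the remaining obstacle is purely combinatorial bookkeeping.
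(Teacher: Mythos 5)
Your argument is correct and arrives at exactly the recursion the paper uses, namely $I(\lex{L_m}{H}) \simeq \Sigma I(\lex{L_{m-3}}{H}) \vee \bigl(\bigvee_n \Sigma^{k+1} I(\lex{L_{m-3}}{H})\bigr) \vee \bigl(\bigvee_n \Sigma^{k+1} I(\lex{L_{m-2}}{H})\bigr)$, and your two-Pascal identity $\binom{q}{p}\binom{p+1}{r} = \binom{q-1}{p-1}\binom{p}{r-1} + \binom{q-1}{p-1}\binom{p}{r} + \binom{q-1}{p}\binom{p+1}{r}$ is precisely the computation the paper performs (the paper first merges the two $n^{p+1}$-terms with Pascal on the upper index $p+1$, then reindexes $q=p+1$ and applies Pascal again; your version packages this as a single per-term identity). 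The differences are organizational. First, you derive the recursion by iterating the Adamaszek-type cofiber splitting $X_m = X_{m-1} \cup_{X_{m-2}} (X_{m-2} * I(H))$ twice, whereas the paper's Theorem \ref{splitting} (applied to $v = m-1$ and the leaf $w = m$, via Lemma \ref{ind pushout} and Lemma \ref{mapping cylinder}) produces the three-term wedge in one step; the two routes are equivalent, and your quotient manipulations ($X_m \simeq (A/C) \vee B$ when $C \to B$ is a null-homotopic cofibration, and $D/C \simeq D \vee \Sigma C$) are all valid for these subcomplex inclusions. Second, and more substantively, you invoke Theorem \ref{forest}(2) up front so that for $m \geq 4$ only reduced Betti numbers need to be tracked; this is a clean shortcut that lets you feed the disconnected cases $m = 2, 3$ into the recurrence as homological data, whereas the paper instead runs its induction on the suspensions $\Sigma I(\lex{L_m}{H})$ and uses Lemma \ref{disjoint suspension} to convert the disjoint unions into wedges. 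The step you defer---checking that $\sum_p N_{m,n,k}(p,d)$ reproduces the reduced Betti numbers of the $m = 1, 2, 3$ cases---is exactly the explicit evaluation of $X_{1,n,k}$, $X_{2,n,k}$, $X_{3,n,k}$ carried out in the paper and is routine; note also that once the recurrence for $N$ is verified for all $m \geq 1$, no separate accounting at $m = 4, 5$ is required, since the induction propagates directly from the initial data at $m \leq 3$.
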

\noindent
Here, $\binom{l}{r}$ denotes the binomial coefficient. We define $\binom{l}{r}=0$ if $r<0$ or $l <r$. 

The rest of this paper is organized as follows. 
In Section \ref{preliminaries}, we define notations on graphs and state some of the basic properties of independence complexes of graphs. 
Section \ref{proof of main theorem} is the main part of this paper. It first provides a condition for the independence complex of a graph to be the union of the independence complexes of given two full subgraphs (Lemma \ref{ind pushout}). Note that the cofiber sequence studied by Adamaszek \cite[Proposition 3.1]{Adamaszek12} is a special case of this decomposition. Using this result, we obtain a decomposition of an independence complex of a lexicographic product, which is essentially important to achieve our purpose (Theorem \ref{splitting}). Then, we prove Theorem \ref{forest}. Here we need an observation on the unreduced suspension of a disjoint union of two spaces (Lemma \ref{disjoint suspension}). 
Section \ref{explicit calculations} contains two examples of the explicit calculations. The first one is the proof of Theorem \ref{line theorem}. The second one is on the relationship between the homological connectivity of $I(\lex{G}{H})$ and the independent domination number of a forest $G$ (Theorem \ref{connectivity and domination}).

\section{Preliminaries}
\label{preliminaries}
In this paper, a {\it graph} always means a {\it finite undirected simple graph} $G$. It is a pair $(V(G), E(G))$, where $V(G)$ is a finite set and $E(G)$ is a subset of $2^{V(G)}$ such that $|e|=2$ for any $e \in E(G)$. An element of $V(G)$ is called a {\it vertex} of $G$, and an element of $E(G)$ is called an {\it edge} of $G$. In order to indicate that $e=\{u, v\}$ ($u,v \in V(G)$), we write $e =uv$.

For a vertex $v \in V(G)$, an {\it open neighborhood} $N_G (v)$ of $v$ in $G$ is defined by 
\begin{align*}
N_G (v) = \{ u \in V(G) \ |\ uv \in E(G) \}. 
\end{align*}
A {\it closed neighborhood} $\neib{G}{v}$ of $v$ in $G$ is defined by $\neib{G}{v} = N_G (v) \sqcup \{ v\}$.

A {\it full subgraph} $H$ of a graph $G$ is a graph such that 
\begin{align*}
V(H) &\subset V(G), \\
E(H) &=\{ uv \in E(G) \ |\ u, v \in V(H) \}.
\end{align*}
For two full subgraphs $H, K$ of $G$, a full subgraph whose vertex set is $V(H) \cap V(K)$ is denoted by $H \cap K$, and a full subgraph whose vertex set is $V(H) \setminus V(K)$ is denoted by $H \setminus K$. For a subset $U \subset V(G)$, $G \setminus U$ is the full subgraph of $G$ such that $V(G \setminus U) = V(G) \setminus U$.

An {\it abstract simplicial complex} $K$ is a collection of finite subsets of a given set $V(K)$ such that
if $\sigma \in K$ and $\tau \subset \sigma$, then $\tau \in K$. An element of $K$ is called a {\it simplex} of $K$. 
For a simplex $\sigma$ of $K$, we set $\dim \sigma = |\sigma| -1 $, where $|\sigma|$ is the cardinality of $\sigma$. 
As noted in Section \ref{introduction}, we do not distinguish an abstract simplicial complex $K$ from its geometric realization $|K|$.

The {\it independence complex} $I(G)$ of a graph $G$ is an abstract simplicial complex defined by
\begin{align*}
I(G) = \{ \sigma \subset V(G) \ |\ uv \notin E(G) \text{ for any $u, v \in \sigma$ } \}.
\end{align*}

For a full subgraph $H$ of $G$, $I(H)$ is a subcomplex of $I(G)$. Furthermore, if $H, K$ are full subgraphs of $G$, then $I(H \cap K) = I(H) \cap I(K)$.
The following proposition is the fundamental property of independence complexes.
\begin{proposition}
\label{disjoint union and join}
Let $G$ be a graph and $G_1$ and $G_2$ be full subgraphs of $G$ such that $V(G)=V(G_1) \sqcup V(G_2)$. 
\begin{enumerate}
\item If $uv \notin E(G)$ for any $u \in V(G_1)$ and $v \in V(G_2)$, then we have
\begin{align*}
I(G) = I(G_1) * I(G_2).
\end{align*}
\item If $uv \in E(G)$ for any $u \in V(G_1)$ and $v \in V(G_2)$, then we have
\begin{align*}
I(G) = I(G_1) \sqcup I(G_2).
\end{align*}
\end{enumerate}
\end{proposition}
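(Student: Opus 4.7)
The plan is to verify both statements directly from the definition of the independence complex, by analyzing which subsets of $V(G) = V(G_1) \sqcup V(G_2)$ are independent under each hypothesis. In both cases the argument is a straightforward unpacking of the definitions of $I(-)$, the join, and the disjoint union of simplicial complexes; no topological machinery is required.

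For part (1), I would take an arbitrary $\sigma \subset V(G)$ and decompose it as $\sigma = \sigma_1 \sqcup \sigma_2$, where $\sigma_i = \sigma \cap V(G_i)$. Since the hypothesis forbids edges of $G$ between $V(G_1)$ and $V(G_2)$, any edge of $G$ contained in $\sigma$ must lie entirely in $V(G_1)$ or entirely in $V(G_2)$; because $G_1$ and $G_2$ are full subgraphs, such edges are precisely the edges of $G_1$ and $G_2$. Hence $\sigma \in I(G)$ iff $\sigma_1 \in I(G_1)$ and $\sigma_2 \in I(G_2)$, which is exactly the defining condition for a simplex of the join $I(G_1) * I(G_2)$ of two abstract simplicial complexes on disjoint vertex sets. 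The corresponding identification of geometric realizations then follows.

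For part (2), the key observation is that any $u \in V(G_1)$ and any $v \in V(G_2)$ are adjacent by hypothesis, so no independent set of $G$ can contain vertices from both $V(G_1)$ and $V(G_2)$. Therefore every $\sigma \in I(G)$ is contained entirely in $V(G_1)$ or entirely in $V(G_2)$, and since $G_1, G_2$ are full subgraphs, such a $\sigma$ is independent in $G$ iff it is independent in the corresponding $G_i$. As abstract simplicial complexes this yields $I(G) = I(G_1) \cup I(G_2)$ with $I(G_1) \cap I(G_2) = \{\emptyset\}$; passing to geometric realizations, the empty simplex contributes nothing, so $|I(G)| = |I(G_1)| \sqcup |I(G_2)|$.

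There is no genuine obstacle in this proof; the only subtlety worth flagging is in part (2), where one must distinguish the abstract simplicial complexes (which share the empty simplex) from their geometric realizations (which are genuinely disjoint), so that the equality $I(G) = I(G_1) \sqcup I(G_2)$ is interpreted correctly at the level of topological spaces.
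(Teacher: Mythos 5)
Your proposal is correct and follows essentially the same route as the paper: both parts are proved by unpacking the definition of $I(G)$ to show that $\sigma$ is independent iff $\sigma\cap V(G_1)$ and $\sigma\cap V(G_2)$ are independent (part (1)), respectively iff $\sigma$ lies wholly in one $V(G_i)$ and is independent there (part (2)). Your extra remark about the shared empty simplex versus the genuinely disjoint geometric realizations is a point the paper passes over silently, but it does not change the argument.
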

\begin{proof}
In the proof, we consider $I(G)$ as an abstract simplicial complex.

Suppose that $uv \notin E(G)$ for any $u \in V(G_1)$ and $v \in V(G_2)$. Then, we have
\begin{align*}
I(G) &=  \left\{\sigma \subset V(G_1) \sqcup V(G_2) \ \middle|\ \left.
\begin{aligned}
&\sigma \cap V(G_1) \in I(G_1) \\
&\text{ and }\\
&\sigma \cap V(G_2) \in I(G_2) 
\end{aligned} \right. \right\}\\
&= I(G_1) * I(G_2) .
\end{align*}

Suppose that $uv \in E(G)$ for any $u \in V(G_1)$ and $v \in V(G_2)$. Then, we have
\begin{align*}
I(G) &=  \left\{\sigma \subset V(G_1) \sqcup V(G_2) \ \middle|\ \left. 
\begin{aligned}
&\sigma \subset V(G_1) \text{ and } \sigma \in I(G_1) \\
&\text{ or } \\
&\sigma \subset V(G_2) \text{ and } \sigma \in I(G_2)
\end{aligned} \right. \right\} \\
&= I(G_1) \sqcup I(G_2) .
\end{align*}
\end{proof}

Let $X$ be a CW complex. We denote the {\it unreduced} suspension of $X$ by $\Sigma X$. For subcomplexes $X_1, X_2$ of $X$ such that $X_1 \cap X_2 =A$, we denote the union of $X_1$ and $X_2$ by $X_1 \cup_A X_2$ in order to indicate that the intersection of $X_1$ and $X_2$ is $A$.

\section{Proof of Theorem \ref{forest}}
\label{proof of main theorem}
We first prove the following theorem, which we need to prove Theorem \ref{forest}.
\begin{theorem}
\label{splitting}
Let $G$ a graph and $v$ be a vertex of $G$. Suppose that there exists a vertex $w$ of $G$ such that $N_G (w) = \{v\}$. Let $H$ be a non-empty graph.
\begin{itemize}
\item If $G \setminus \neib{G}{v} = \emptyset$, then we have
\begin{align*}
I(\lex{G}{H}) = I(H) \sqcup I(\lex{(G \setminus \{v\})}{H}) .
\end{align*}
\item If $G \setminus \neib{G}{v} \neq \emptyset$, then we have
\begin{align*}
I(\lex{G}{H}) \simeq &\Sigma I(\lex{(G \setminus \neib{G}{v} )}{H}) \vee \left(I(\lex{(G \setminus \neib{G}{v} )}{H}) * I(H) \right) \\
&\ \vee \left(I(\lex{(G \setminus\{v, w\})}{H}) * I(H) \right) .
\end{align*}
\end{itemize}
\end{theorem}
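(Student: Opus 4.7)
The plan is to treat the two cases separately. \emph{Case 1} ($G \setminus \neib{G}{v} = \emptyset$) follows immediately from Proposition \ref{disjoint union and join}(2): the hypothesis forces every vertex of $G$ other than $v$ to lie in $N_G(v)$, so in $\lex{G}{H}$ every vertex of the pseudo-vertex $\{v\} \times V(H)$ is adjacent to every vertex of $(V(G) \setminus \{v\}) \times V(H)$. The induced subgraphs on these two parts are $H$ and $\lex{(G\setminus\{v\})}{H}$, so the proposition yields the asserted disjoint union.

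For \emph{Case 2} ($G \setminus \neib{G}{v} \neq \emptyset$) I would first present $I(\lex{G}{H})$ as a pushout. Let $G'$ denote the full subgraph of $G$ on $\{v\} \cup V(G \setminus \neib{G}{v})$ and set $A := I(\lex{(G \setminus \neib{G}{v})}{H})$. Sorting simplices of $I(\lex{G}{H})$ according to whether they meet $\{v\} \times V(H)$ --- noting that if they do, then no other vertex may have first coordinate in $N_G(v)$ --- one reads off
\begin{equation*}
I(\lex{G}{H}) = I(\lex{(G \setminus \{v\})}{H}) \cup_A I(\lex{G'}{H}),
\end{equation*}
which is an instance of Lemma \ref{ind pushout}. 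Each of the two pieces now simplifies to a join with $I(H)$: in $G \setminus \{v\}$ the vertex $w$ is isolated because $N_G(w) = \{v\}$, so Proposition \ref{disjoint union and join}(1) gives $I(\lex{(G\setminus\{v\})}{H}) = I(H) * I(\lex{(G\setminus\{v,w\})}{H})$; and in $G'$ the vertex $v$ is isolated by construction, so the same proposition gives $I(\lex{G'}{H}) = I(H) * A$.

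The remaining input is purely homotopy-theoretic. Since $H$ is non-empty, $I(H)$ has at least one vertex, and in each of the two pieces $X_1 := I(H) * I(\lex{(G\setminus\{v,w\})}{H})$ and $X_2 := I(H) * A$ the subcomplex $A$ sits inside a cone (the sub-join spanned by a single vertex of $I(H)$ together with $A$). Consequently both inclusions $A \hookrightarrow X_i$ are null-homotopic. I would then invoke (or prove on the spot via the double mapping cylinder) the general principle that, for a CW triple $A \subset X_1, X_2$ with both inclusions null-homotopic,
\begin{equation*}
X_1 \cup_A X_2 \simeq X_1 \vee X_2 \vee \Sigma A.
\end{equation*}
One proves this by replacing the pushout with the double mapping cylinder $X_1 \cup_{A \times \{0\}} (A \times [0,1]) \cup_{A \times \{1\}} X_2$ and using the two null-homotopies to contract $A \times [0,1/3]$ into a single point of $X_1$ and $A \times [2/3,1]$ into a single point of $X_2$, leaving the middle slab $A \times [1/3,2/3]$ with both ends pinched --- a copy of $\Sigma A$ --- wedged onto $X_1 \vee X_2$. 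Substituting back the formulas for $X_1$, $X_2$, $A$ gives exactly the three wedge summands in the statement. The main obstacle is making this null-homotopy wedge lemma precise; once it is in hand, the rest reduces to bookkeeping with Proposition \ref{disjoint union and join} and the full-subgraph decomposition.
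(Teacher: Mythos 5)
Your proposal is correct and follows essentially the same route as the paper: the same pushout decomposition via Lemma \ref{ind pushout}, the same join factorizations from Proposition \ref{disjoint union and join}, the same cone argument for null-homotopy of the two inclusions, and the same double-mapping-cylinder wedge lemma (the paper's Lemma \ref{mapping cylinder}). The only cosmetic difference is that you handle the case $G \setminus \neib{G}{v} = \emptyset$ directly from Proposition \ref{disjoint union and join}(2) rather than as the degenerate instance of the pushout, which is equally valid.
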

The proof of Theorem \ref{splitting} has two steps. The first step is to decompose $I(\lex{G}{H})$ as a union of $I(\lex{(G \setminus N_G (v))}{H})$ and $I(\lex{(G \setminus \{v\})}{H})$. The second step is to transform this union into a wedge sum. We need two lemmas corresponding to these two steps.
\begin{lemma}
\label{ind pushout}
Let $G$ be a graph and $H, K \subset G$ be full subgraphs of $G$ such that $V(H) \cup V(K) =V(G)$. Suppose that $v_1 v_2 \in E(G)$ for any vertices $v_1 \in V(H) \setminus V(K)$ and $v_2 \in V(K) \setminus V(H)$. Then,
\begin{align*}
I(G) = I(H) \cup_{I(H \cap K)} I(K).
\end{align*}
\end{lemma}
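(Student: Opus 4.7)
The plan is to verify the set-theoretic equality of simplicial complexes
\[
I(G) = I(H) \cup I(K), \qquad I(H) \cap I(K) = I(H \cap K),
\]
from which the claimed pushout/union description follows immediately. I will work at the level of abstract simplicial complexes, viewing each $I(\cdot)$ as a collection of subsets of the relevant vertex set.

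For the intersection, I would use only the fact that $H$ and $K$ are \emph{full} subgraphs. A simplex $\sigma \in I(H) \cap I(K)$ is simultaneously a subset of $V(H)$ and of $V(K)$, hence of $V(H \cap K)$; since an edge of $H \cap K$ is by definition an edge of $G$ whose endpoints lie in $V(H) \cap V(K)$, $\sigma$ is also independent in $H \cap K$. The reverse inclusion is identical, because any independent set of $H \cap K$ is independent in $H$ and in $K$ by fullness. This step uses none of the special hypothesis on cross-edges.

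For the union, the inclusion $I(H) \cup I(K) \subseteq I(G)$ is again just fullness: if $\sigma$ is independent in $H$ (say), then no two vertices of $\sigma \subseteq V(H)$ span an edge of $H$, hence none span an edge of $G$. The nontrivial direction is $I(G) \subseteq I(H) \cup I(K)$, and this is where the edge hypothesis enters. Given $\sigma \in I(G)$, I want to show $\sigma \subseteq V(H)$ or $\sigma \subseteq V(K)$. Suppose for contradiction that neither inclusion holds; then there exist $v_1 \in \sigma \setminus V(K)$ and $v_2 \in \sigma \setminus V(H)$. Since $V(H) \cup V(K) = V(G)$, these satisfy $v_1 \in V(H) \setminus V(K)$ and $v_2 \in V(K) \setminus V(H)$. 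By assumption $v_1 v_2 \in E(G)$, contradicting the independence of $\sigma$.

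Combining the two equalities gives the pushout statement $I(G) = I(H) \cup_{I(H \cap K)} I(K)$. I do not expect any real obstacle: once the correct partition of $\sigma$ using $V(H) \cup V(K) = V(G)$ is made, the edge hypothesis is used exactly once, and fullness of $H$ and $K$ takes care of everything else. The only point worth double-checking is that the hypothesis is really needed in just one direction (namely $I(G) \subseteq I(H) \cup I(K)$), which clarifies why Lemma \ref{ind pushout} is a genuine strengthening of the trivially-true decomposition into full subgraphs.
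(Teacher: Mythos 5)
Your proposal is correct and follows essentially the same argument as the paper: the key step in both is that a simplex of $I(G)$ meeting both $V(H)\setminus V(K)$ and $V(K)\setminus V(H)$ would contain an edge, so every simplex lies in $I(H)$ or $I(K)$. You merely phrase it by contradiction rather than by the paper's direct case split, and you additionally spell out the identification $I(H)\cap I(K)=I(H\cap K)$, which the paper records separately in its preliminaries.
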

\begin{proof}
For a simplex $\sigma$ of $I(G)$, suppose that there exists a vertex $u_0 \in \sigma \cap (V(H) \setminus V(K))$. Then, by the assumption of the lemma, any vertex $v \in V(K) \setminus V(H)$ is adjacent to $u_0$. So, $\sigma \cap (V(K) \setminus V(H))$ must be empty, which means that $\sigma$ is a simplex of $H$.  On the other hand, if $\sigma \cap (V(H) \setminus V(K)) = \emptyset$, then $\sigma$ is a simplex of $K$ since $V(H) \cup V(K) = V(G)$.
\end{proof}
\begin{figure}[htb]
\begin{tabular}{c}
\begin{tikzpicture}
\foreach \x in {(0,1),(1,1),(2,1),(3,1),(2,0),(3,0),(4,0)} {\node at \x [vertex] {};}
\draw (0,1)--(3,1) (2,0)--(4,0);
\node at (-1,0) {$K$};
\node at (-1,1) {$H$};
\end{tikzpicture}
\\
\\
\begin{tikzpicture}
\foreach \x in {(0,1),(1,1),(2,1),(3,1),(4,1)} {\node at \x [vertex] {};}
\draw (0,1)--(4,1);
\draw (0,1) to [out=30, in =150] (4,1);
\draw (1,1) to [out=30, in =150] (4,1);
\node at (-1, 1) {$G$};
\end{tikzpicture}
\end{tabular}
\caption{A graph $G$ and its subgraphs $H, K$ such that $I(G)= I(H) \cup I(K)$.}
\end{figure}
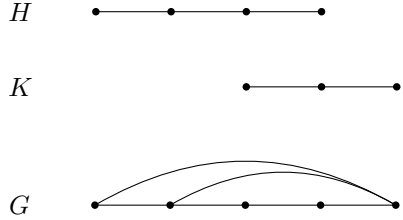
\begin{example}
For a graph $G$ and a vertex $v \in V(G)$, consider two subgraphs $G \setminus \{v\}$ and $G \setminus N_G (v)$ of $G$. We have
\begin{align*}
&(V(G) \setminus \{v\}) \setminus (V(G) \setminus N_G (v)) = N_G (v) ,\\
&(V(G) \setminus N_G (v)) \setminus (V(G) \setminus \{v\}) = \{v \}, \\
&(G \setminus \{v\}) \cap (G \setminus N_G (v)) = G \setminus \neib{G}{v} .
\end{align*}
Then, by Lemma \ref{ind pushout}, we have
\begin{align*}
I(G) = I(G \setminus \{v\}) \cup_{I(G \setminus \neib{G}{v})} I(G \setminus N_G (v)).
\end{align*}
Since $I(G \setminus N_G (v)) = I(G \setminus \neib{G}{v}) * \{v\} $, we obtain a cofiber sequence 
\begin{align*}
\xymatrix{
I(G \setminus \neib{G}{v}) \ar@{^{(}->}[r] & I(G \setminus \{v\}) \ar[r] & I(G),
}
\end{align*}
which was studied by Adamaszek \cite[Proposition 3.1]{Adamaszek12}.
\end{example}

\begin{lemma}
\label{mapping cylinder}
Let $X$ be a CW complex and $X_1, X_2$ be subcomplexes of $X$ such that $X=X_1 \cup X_2$. If the inclusion maps $i_1: X_1\cap X_2 \to X_1$ and $i_2 : X_1 \cap X_2 \to X_2$ are null-homotopic, then we have
\begin{align*}
X \simeq X_1 \vee X_2 \vee \Sigma (X_1 \cap X_2) .
\end{align*}
\end{lemma}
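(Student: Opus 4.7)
The plan is to realize $X$ as a homotopy pushout of $i_1$ and $i_2$, and then exploit the null-homotopies to replace these inclusions by constant maps. Writing $A := X_1 \cap X_2$, the assumption that $X_1, X_2$ are subcomplexes of the CW complex $X$ with $X_1 \cup X_2 = X$ forces each $i_j : A \hookrightarrow X_j$ to be a cofibration and $X$ to be the honest pushout of $X_1 \xleftarrow{i_1} A \xrightarrow{i_2} X_2$. Because this pushout is along a cofibration, it coincides up to homotopy with the double mapping cylinder
\[
M \;=\; X_1 \cup_{i_1} (A \times I) \cup_{i_2} X_2,
\]
so the first step is to establish $X \simeq M$.

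Next I would invoke the standard fact that the homotopy type of $M$ depends only on the homotopy classes of its attaching maps: each end inclusion $A \times \{k\} \hookrightarrow A \times I$ is a cofibration, so a null-homotopy of $i_j$ extends to a homotopy equivalence that replaces $i_j$ by a constant. Carrying this out for both $j = 1, 2$, I may assume $i_1, i_2$ are constant maps with values $x_1 \in X_1$ and $x_2 \in X_2$. In the resulting pushout, $A \times I$ has its two ends collapsed to the points $x_1$ and $x_2$, producing the unreduced suspension $\Sigma A$ attached to $X_1$ and $X_2$ only at its two cone-points.

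Finally, provided $A$ is non-empty, $\Sigma A$ is path-connected, so any path $\gamma$ in $\Sigma A$ joining the two cone-points can be used to slide the attachment of $X_2$ onto the cone-point where $X_1$ is already attached; formally, this is the standard basepoint-change homotopy equivalence along a path in a path-connected space. What remains is the triple wedge
\[
X \;\simeq\; X_1 \vee X_2 \vee \Sigma(X_1 \cap X_2),
\]
all based at a common point.

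The main technical obstacle I expect is the map-replacement step: one must verify that the two individual homotopy equivalences of mapping cylinders, built separately from the null-homotopies of $i_1$ and $i_2$, glue into a coherent homotopy equivalence of the whole double mapping cylinder $M$. This is the standard gluing lemma for cofibrations, but it does rely on the CW hypothesis to supply the homotopy extension property at each end of $A \times I$.
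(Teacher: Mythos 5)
Your proposal is correct and follows essentially the same route as the paper: both identify $X$ with the double mapping cylinder of $i_1,i_2$, use the null-homotopies (via the cofibration/homotopy-extension property at the ends of $A\times I$) to replace the inclusions by constant maps, obtain $X_1$ and $X_2$ attached at the two cone points of $\Sigma(X_1\cap X_2)$, and slide to a common basepoint. Your explicit caveat that $X_1\cap X_2$ should be non-empty for the final wedge identification is a point the paper leaves implicit (and which holds in all of its applications).
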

\begin{proof}
Consider the mapping cylinder $M(i_1, i_2)$ of $i_1, i_2$. Let $u \in X_1$ and $v \in X_2$ be points such that $i_1 \simeq c_u$ and $i_2 \simeq c_v$, where $c_u : X_1 \cap X_2 \to X_1$ and $c_v :X_1 \cap  X_2 \to X_2$ are the constant map to $u$ and $v$, respectively. Then, we have
\begin{align*}
X = X_1 \cup X_2 \simeq M(i_1, i_2) \simeq M(c_u, c_v) = X_1 \vee_u \Sigma(X_1 \cap X_2) \vee_v X_2.
\end{align*}
This is the desired conclusion.
\end{proof}

\begin{proof}[Proof of Theorem \ref{splitting}]
Consider two full subgraphs $K_1, K_2$ of $\lex{G}{H}$ defined by
\begin{align*}
&K_1=\lex{(G \setminus N_G (v))}{H} ,\\
&K_2=\lex{(G \setminus \{v\})}{H} .
\end{align*}\
Then we have
\begin{align*}
&V(K_1) \setminus V(K_2) = \{v\} \times V(H) ,\\
&V(K_2) \setminus V(K_1) = N_G (v) \times V(H) , \\
&K_1 \cap K_2 =\lex{(G \setminus \neib{G}{v})}{H}.
\end{align*}
It follows that $v_1 v_2 \in E(\lex{G}{H})$ for any vertices $v_1 \in V(K_1) \setminus V(K_2)$ and $v_2 \in V(K_2) \setminus V(K_1)$ since $u v \in E(G)$ for any $u \in N_G (v)$. So, by Lemma \ref{ind pushout}, we obtain
\begin{align*}
I(\lex{G}{H}) = I(\lex{(G \setminus N_G (v))}{H}) \cup_{I(\lex{(G \setminus \neib{G}{v})}{H})} I(\lex{(G \setminus \{v\})}{H})  .
\end{align*} 

If $G \setminus \neib{G}{v} = \emptyset$, then 
\begin{align*}
I(\lex{(G \setminus \neib{G}{v})}{H}) &= I(\lex{\emptyset}{H}) = I(\emptyset) = \emptyset, \\
I(\lex{(G \setminus N_G (v))}{H}) &= I(\lex{\{v\}}{H}) = I(H).
\end{align*}
So, the desired formula is obtained directly.

Suppose that $G \setminus \neib{G}{v} \neq \emptyset$. Let $i : I(\lex{(G \setminus \neib{G}{v})}{H}) \to I(\lex{(G \setminus N_G (v))}{H})$ and 
$j: I(\lex{(G \setminus \neib{G}{v})}{H}) \to I(\lex{(G \setminus \{v\})}{H})$ be the inclusion maps. By Proposition \ref{disjoint union and join}, we have
\begin{align*}
I(\lex{(G \setminus N_G (v))}{H}) &= I(\lex{((G \setminus \neib{G}{v})  \sqcup \{v\})}{H}) \\
&=  I(\lex{(G \setminus \neib{G}{v})}{H}) * I(H), \\
I(\lex{(G \setminus \{v\})}{H}) &= I(\lex{((G \setminus \{v, w\}) \sqcup \{w\})}{H}) \\
&= I(\lex{(G \setminus \{v, w\})}{H}) * I(H).
\end{align*}
The third equality follows from $N_G (w) = \{v\}$. 
Here, $I(H)$ is non-empty since $H$ is non-empty. Let $x \in I(H)$ be a point. Then, we have
\begin{align*}
I(\lex{(G \setminus \neib{G}{v})}{H}) * \{x\} &\subset  I(\lex{(G \setminus \neib{G}{v})}{H}) * I(H), \\
I(\lex{(G \setminus \neib{G}{v})}{H}) * \{x\} &\subset I(\lex{(G \setminus \{v, w\})}{H}) * I(H) .
\end{align*}
The second inclusion follows from $\{v , w\} \subset \neib{G}{v}$.
These inclusions indicate that $i, j$ are null-homotopic. Therefore, by Lemma \ref{mapping cylinder}, we obtain
\begin{align*}
I(\lex{G}{H}) = & I(\lex{(G \setminus N_G (v))}{H}) \cup_{I(\lex{(G \setminus \neib{G}{v})}{H})}  I(\lex{(G \setminus \{v\})}{H}) \\
\simeq &\Sigma I(\lex{(G \setminus \neib{G}{v} )}{H}) \vee \left(I(\lex{(G \setminus \neib{G}{v} )}{H}) * I(H) \right) \\
&\ \vee \left(I(\lex{(G \setminus\{v, w\})}{H}) * I(H) \right) .
\end{align*} 
So, the proof is completed.
\end{proof}

In order to derive Theorem \ref{forest} from Theorem \ref{splitting}, we need some topological observations, which we state in the following two lemmas.
\begin{lemma}
\label{disjoint suspension}
Let $X, Y$ be CW complexes. Then we have
\begin{align*}
\Sigma(X \sqcup Y) \simeq \Sigma X \vee \Sigma Y \vee \sphere{1}.
\end{align*}
\end{lemma}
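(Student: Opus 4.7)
The plan is to apply Lemma \ref{mapping cylinder} with $X_1 = \Sigma X$, $X_2 = \Sigma Y$, and $X_1 \cap X_2 = S^{0}$.

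First I would set up the decomposition. Writing the unreduced suspension as $\Sigma Z = (Z \times [0,1]) / (Z \times \{0\} \to p_0, \ Z \times \{1\} \to p_1)$ and applying this construction to the disjoint union $X \sqcup Y$, the two cone points $p_0, p_1$ are shared by the two pieces coming from $X$ and from $Y$. Hence $\Sigma(X \sqcup Y)$ has an obvious CW structure in which $\Sigma X$ and $\Sigma Y$ appear as subcomplexes, and $\Sigma(X \sqcup Y) = \Sigma X \cup \Sigma Y$ with $\Sigma X \cap \Sigma Y = \{p_0, p_1\} \cong S^{0}$.

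Next I would verify the null-homotopy hypothesis of Lemma \ref{mapping cylinder}. Choosing any point $x \in X$ (we may assume $X, Y$ non-empty; otherwise the statement is straightforward), the meridian $\{x\} \times [0,1]$ inside $\Sigma X$ joins $p_0$ to $p_1$, so the inclusion $S^{0} = \{p_0, p_1\} \hookrightarrow \Sigma X$ factors through a contractible arc and is therefore null-homotopic. The same argument works for the inclusion $S^{0} \hookrightarrow \Sigma Y$.

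Finally I would invoke Lemma \ref{mapping cylinder} to conclude
\begin{align*}
\Sigma(X \sqcup Y) \simeq \Sigma X \vee \Sigma Y \vee \Sigma S^{0} = \Sigma X \vee \Sigma Y \vee \sphere{1},
\end{align*}
using the elementary identification $\Sigma S^{0} \cong \sphere{1}$ (two arcs glued at both endpoints). There is no real obstacle here; the only point requiring a moment of care is that the lemma packages the two null-homotopies into a single mapping cylinder argument, so one should be comfortable that the decomposition above meets the CW-subcomplex hypothesis of Lemma \ref{mapping cylinder}, which it does by construction.
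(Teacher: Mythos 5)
Your proof is correct and follows essentially the same route as the paper: decompose $\Sigma(X \sqcup Y)$ as $\Sigma X \cup_{\{p_0,p_1\}} \Sigma Y$, use the meridian arcs through the cone points to see that both inclusions of $\{p_0,p_1\}$ are null-homotopic, and apply Lemma \ref{mapping cylinder} together with $\Sigma S^0 \cong S^1$. The only (shared) implicit caveat is that $X$ and $Y$ are taken non-empty, which is the case in all applications in the paper.
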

\begin{proof}
Let $u, v$ be cone points of $\Sigma ( X \sqcup Y)$. Then we have
\begin{align*}
\Sigma(X \sqcup Y) = \Sigma X \cup_{\{u,v\}} \Sigma Y .
\end{align*}
For $x \in X$ and $y \in Y$, there are line segments $xu, xv \subset \Sigma X$ and $yu, yv \subset \Sigma Y$. So, the inclusion maps $\{u, v \} \to \Sigma X$, $\{u, v\} \to \Sigma Y$ are null-homotopic. Therefore, it follows from Lemma \ref{mapping cylinder} that
\begin{align*}
\Sigma (X \sqcup Y) &\simeq \Sigma X \vee \Sigma Y \vee \Sigma\{u, v\}  \\
&\simeq \Sigma X \vee \Sigma Y \vee \sphere{1}.
\end{align*}
\end{proof}

\begin{lemma}
\label{sphere join}
Let $A$, $B$, $C$ be CW complexes such that each of them is homotopy equivalent to a wedge sum of spheres. Then, both $A*B$ and $(A \sqcup B) *C$ are again homotopy equivalent to a wedge sum of spheres.
\end{lemma}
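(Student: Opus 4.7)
The plan is to reduce both joins to smash products and suspensions, and then exploit the distributivity of these operations over wedge sums together with the identity $\sphere{p} \wedge \sphere{q} \simeq \sphere{p+q}$. For the first assertion, I would invoke the standard homotopy equivalence $X * Y \simeq \Sigma(X \wedge Y)$ valid for well-pointed CW complexes. Writing $A \simeq \bigvee_i \sphere{a_i}$ and $B \simeq \bigvee_j \sphere{b_j}$ and using that both smash and suspension distribute over wedge, one obtains
\begin{align*}
A * B \simeq \Sigma(A \wedge B) \simeq \Sigma \bigvee_{i, j} \sphere{a_i + b_j} \simeq \bigvee_{i, j} \sphere{a_i + b_j + 1},
\end{align*}
which is a wedge of spheres.

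For the second assertion, I would decompose the join along the disjoint union. Directly from the definition of $(A \sqcup B) * C$ as a quotient of $(A \sqcup B) \times C \times I$, one identifies
\begin{align*}
(A \sqcup B) * C = (A * C) \cup_C (B * C),
\end{align*}
where the common subspace is the copy of $C$ sitting at the ``$t = 1$'' end of each join. For any non-empty CW complex $X$, the inclusion $C \hookrightarrow X * C$ is null-homotopic: slide each $c \in C$ along the line segment in the join to a fixed point $x_0 \in X$. Applying Lemma \ref{mapping cylinder} to this decomposition then yields
\begin{align*}
(A \sqcup B) * C \simeq (A * C) \vee (B * C) \vee \Sigma C,
\end{align*}
and every wedge summand is a wedge of spheres: the first two by the previous step, and $\Sigma C$ because $\Sigma$ distributes over wedge and $\Sigma \sphere{d} \simeq \sphere{d+1}$.

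The main obstacle will be the careful verification of the decomposition $(A \sqcup B) * C = (A * C) \cup_C (B * C)$ together with the null-homotopies of $C \hookrightarrow A * C$ and $C \hookrightarrow B * C$. Both are elementary but require tracking the join quotient explicitly. Minor care is also needed for degenerate cases where some of $A$, $B$, $C$ reduces to a point (an empty wedge); these either fall inside the generic argument or are trivial.
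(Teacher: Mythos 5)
Your proof is correct, and the first assertion ($A*B$) is handled exactly as in the paper: via $X*Y \simeq \Sigma(X \wedge Y)$ and distributivity of smash and suspension over wedges. For the second assertion your route genuinely differs. The paper first distributes the join over the wedge decomposition $C \simeq \bigvee_k \sphere{c_k}$ and then applies its Lemma \ref{disjoint suspension} to each term $(A \sqcup B) * \sphere{c_k}$, i.e.\ it reduces everything to the already-established fact $\Sigma(X \sqcup Y) \simeq \Sigma X \vee \Sigma Y \vee \sphere{1}$ applied spherewise. You instead decompose the join itself along the disjoint union, $(A \sqcup B)*C = (A*C) \cup_C (B*C)$, observe that $C \hookrightarrow X*C$ is null-homotopic whenever $X \neq \emptyset$, and apply Lemma \ref{mapping cylinder} once to get $(A*C) \vee (B*C) \vee \Sigma C$ for \emph{arbitrary} $C$; specializing $C$ to a wedge of spheres then gives the same final answer as the paper. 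Your version is slightly more general and conceptual (taking $C=\sphere{0}$ recovers Lemma \ref{disjoint suspension} itself), at the cost of having to verify the union decomposition of the join and that it is a decomposition by subcomplexes so that Lemma \ref{mapping cylinder} applies; the paper's version avoids any new point-set work by staying entirely within formal wedge/smash/suspension manipulations. The only caveats in your argument are the ones you already flag: $A$ and $B$ must be non-empty for the null-homotopies $C \to A*C$ and $C \to B*C$ (automatic here, since a wedge of spheres contains its basepoint), and degenerate cases where a factor is a point are trivially consistent with the formula.
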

\begin{proof}
We first claim that for any CW complex $X, Y, Z$, we have
\begin{align*}
(X \vee Y) * Z \simeq (X * Z) \vee (Y * Z).
\end{align*}
This is because $X * Y$ is homotopy equivalent to $\Sigma( X \land Y)$ for any pointed CW complexes $(X, x_0)$ and $(Y,y_0)$. This homotopy equivalence yields
\begin{align*}
(X \vee Y) * Z  &\simeq \Sigma((X \vee Y) \land Z) \simeq \Sigma((X \land Z) \vee (Y \land Z)) \\
&\simeq \Sigma(X \land Z) \vee \Sigma(Y \land Z) \simeq (X * Z) \vee (Y * Z)
\end{align*}
as desired.

Let $A= \bigvee_i \sphere{a_i}$, $B= \bigvee_j \sphere{b_j}$, $C= \bigvee_k \sphere{c_k}$ be arbitrary wedge sums of spheres.
It follows from Lemma \ref{disjoint suspension} and above claim that
\begin{align*}
A * B &\simeq \left(\bigvee_i \sphere{a_i} \right) * \left(\bigvee_j \sphere{b_j} \right) \simeq \bigvee_i \left(\sphere{a_i} * \left( \bigvee_j \sphere{b_j} \right) \right) \\
&\simeq \bigvee_{i,j} \left( \sphere{a_i} * \sphere{b_j} \right) \simeq \bigvee_{i,j}\sphere{a_i + b_j +1},
\end{align*}
\begin{align*}
(A \sqcup B ) *C 
&\simeq \left( \left(\bigvee_i \sphere{a_i} \right) \sqcup \left(\bigvee_j \sphere{b_j} \right) \right) * \left(\bigvee_k \sphere{c_k} \right) \\
&\simeq \bigvee_k \left( \left( \left(\bigvee_i \sphere{a_i} \right) \sqcup \left(\bigvee_j \sphere{b_j} \right) \right)* \sphere{c_k} \right) \\
&\simeq \bigvee_k \left( \left(\left(\bigvee_i \sphere{a_i} \right) * \sphere{c_k} \right) \vee \left( \left(\bigvee_j \sphere{b_j} \right) * \sphere{c_k} \right) \vee \sphere{c_k +1} \right) \\
&\simeq \bigvee_k \left( \left(\bigvee_i \sphere{a_i + c_k +1} \right) \vee \left(\bigvee_j \sphere{b_j +c_k +1} \right)  \vee \sphere{c_k +1} \right) \\
&\simeq \left( \bigvee_{i,k} \sphere{a_i + c_k +1} \right) \vee \left( \bigvee_{j,k} \sphere{b_j + c_k +1} \right) \vee \left( \bigvee_k \sphere{c_k +1} \right).
\end{align*}
Therefore, we obtain the desired conclusion.
\end{proof} 

We are now ready to prove Theorem \ref{forest}.
\begin{proof}[Proof of Theorem \ref{forest}]
We prove the theorem by induction on $|V(G)|$.  Before we start, we confirm two cases.

First, suppose that $G$ is a star on at least $2$ vertices, namely $|V(G)| \geq 2$ and there exists $v \in V(G)$ such that $G \setminus \neib{G}{v} = \emptyset$. We have $u_1 u_2 \notin E(G)$ for any $u_1, u_2 \in N_G (v) = G \setminus \{v\}$ since $G$ is a forest. So, by Theorem \ref{splitting}, we get
\begin{align*}
I(\lex{G}{H}) & =  I(H)  \sqcup  I(\lex{(G \setminus \{v\})}{H}) \\
&=I(H) \sqcup \left(\mathop{*}_{|V(G)| - 1} I(H) \right) .
\end{align*}
Since $|V(G)|-1 \geq 1$, the join of copies of $I(H)$ is homotopy equivalent to a wedge sum of spheres by Lemma \ref{sphere join}. Therefore, $I(\lex{G}{H})$ is homotopy equivalent to a disjoint union of two wedge sums of spheres. 

Next, suppose that $G$ has no edges. Then $I(\lex{G}{H})$ is the join of $|V(G)|$ copies of $I(H)$, which is a wedge sum of spheres by Lemma \ref{sphere join}. 

Now we start the induction. 
The forest $G$ with $|V(G)| \leq 2$ is isomorphic to one of $L_1$, $L_2$ and $L_1 \sqcup L_1$. They are included in the above cases. Hence, for a forest $G$ with $|V(G)| \leq 2$, $I(\lex{G}{H})$ is homotopy equivalent to a wedge sum of spheres or a disjoint union of two wedge sums of spheres. 

Assume that for any forest $G'$ such that $|V(G')| \leq n$, $I(\lex{G'}{H})$ is homotopy equivalent to a wedge sum of spheres or a disjoint union of two wedge sums of spheres. Let $G$ be a forest with at least one edge such that $|V(G)|=n+1$ and $G \setminus \neib{G}{v} \neq \emptyset$ for any $v \in V(G)$. Then, since $G$ is a forest, there exists $w \in V(G)$ such that $N_G (w) = \{v\}$ for some $v \in V(G)$ (namely a leaf $w$ of $G$). We write $G_1 = G \setminus \neib{G}{v}$ and $G_2 =G \setminus\{v, w\}$. Then, $G_1, G_2$ are forests such that $|V(G_1)| \leq n-1$, $|V(G_2)| \leq n-1$. 

Since $G_1=G \setminus \neib{G}{v}$ is not empty, it follows from Theorem \ref{splitting} that
\begin{align*}
I(\lex{G}{H}) \simeq &\Sigma I(\lex{G_1}{H}) \vee \left(I(\lex{G_1}{H}) * I(H) \right) \vee \left(I(\lex{G_2}{H}) * I(H) \right) .
\end{align*}
By the assumption of the induction, $I(\lex{G_1}{H})$ and $I(\lex{G_2}{H})$ are homotopy equivalent to a wedge sum of spheres or a disjoint union of two wedge sums of spheres. Therefore, by Lemma \ref{sphere join}, $I(\lex{G}{H})$ is homotopy equivalent to a wedge sum of spheres.
So, the proof is completed.
\end{proof}
\begin{remark}
\label{contractible}
For a graph $H$, suppose that $I(H)$ is contractible. Then, for a forest $G$, we have $I(\lex{G}{H}) \simeq I(G)$. We can prove this fact in the same way as in the proof of Theorem \ref{forest}. 
\end{remark}
\begin{example}
Recall that a graph $G$ is {\it chordal} if it contains no cycle of length at least $4$. Kawamura \cite[Theorem 1.1]{Kawamura10} proved that the independence complex of a chordal graph is either contractible or homotopy equivalent to a wedge sum of spheres. In particular, Ehrenborg and Hetyei \cite[Corollary 6.1]{EhrenborgHetyei06} proved that the independence complex of a forest is either contractible or homotopy equivalent to a single sphere. So, it follows from Theorem \ref{forest} and Remark \ref{contractible} that $I(\lex{G}{H})$ is either contractible or homotopy equivalent to a wedge sum of spheres if $G$ is a forest and $H$ is a chordal graph.
\end{example} 

\section{Explicit Calculations}
\label{explicit calculations}
In this section, we offer two examples of explicit calculations on $I(\lex{G}{H})$. First, we prove Theorem \ref{line theorem}.
\begin{proof}[Proof of Theorem \ref{line theorem}]
For $m=1,2,3$, it follows from Proposition \ref{disjoint union and join} that 
\begin{align*}
I(\lex{L_1}{H}) &= I(H) \simeq {\bigvee}_n \sphere{k} , \\
I(\lex{L_2}{H}) &= I(H) \sqcup I(H) \simeq \left( {\bigvee}_n \sphere{k} \right) \sqcup \left( {\bigvee}_n \sphere{k} \right), \\
I(\lex{L_3}{H}) &= I(H) \sqcup (I(H) * I(H)) \\
&\simeq \left( {\bigvee}_n \sphere{k} \right) \sqcup \left( \left( {\bigvee}_n \sphere{k} \right) * \left( {\bigvee}_n \sphere{k} \right) \right) \\
&\simeq \left( {\bigvee}_n \sphere{k} \right) \sqcup \left(  {\bigvee}_n \left( \sphere{k} * \left( {\bigvee}_n \sphere{k} \right) \right) \right) \\
&\simeq \left( {\bigvee}_n \sphere{k} \right) \sqcup \left(  {\bigvee}_n \left( {\bigvee}_n \sphere{k} * \sphere{k} \right) \right) \\
&\simeq \left( {\bigvee}_n \sphere{k} \right) \sqcup  \left( {\bigvee}_{n^2} \sphere{2k+1} \right).
\end{align*}
For $r \geq 1$, let $G=L_{r+3}$ and $v=r+2, w=r+3 \in V(L_{r+3})$. Then we have $N_G (w)=\{v\}$ and $G \setminus \neib{G}{v} = L_r \neq \emptyset$. So, by Theorem \ref{splitting}, we obtain
\begin{align}
&I(\lex{L_{r+3}}{H}) \nonumber \\
\simeq &\Sigma I(\lex{L_r}{H}) \vee \left(I(\lex{L_r}{H}) * I(H) \right) \ \vee \left(I(\lex{L_{r+1}}{H}) * I(H) \right) \nonumber \\
\simeq &\Sigma I(\lex{L_r}{H}) \vee \left(I(\lex{L_r}{H}) * \left( {\bigvee}_n \sphere{k} \right) \right) \ \vee \left(I(\lex{L_{r+1}}{H}) *  \left( {\bigvee}_n \sphere{k} \right) \right)  \nonumber \\
\simeq &\Sigma I(\lex{L_r}{H}) \vee  \left( {\bigvee}_n I(\lex{L_r}{H}) * \sphere{k} \right) \vee \left( {\bigvee}_n I(\lex{L_{r+1}}{H}) * \sphere{k} \right) \nonumber \\
\simeq &\Sigma I(\lex{L_r}{H}) \vee  \left( {\bigvee}_n \Sigma^{k+1} I(\lex{L_r}{H}) \right) \vee \left( {\bigvee}_n \Sigma^{k+1} I(\lex{L_{r+1}}{H}) \right) . \label{Lm recursive}
\end{align}

Define a CW complex $X_{m,n,k}$ for $m\geq 1$, $n \geq 1$ and $k \geq 0$ by 
\begin{align*}
X_{m,n,k}= \bigvee_{d \geq 0} \left( \bigvee_{p \geq 0} \left( {\bigvee}_{N_{m,n,k}(p,d)} \sphere{d} \right) \right) ,
\end{align*}
where
\begin{align*}
N_{m,n,k}(p,d) &= n^p \binom{d-pk+1}{p} \binom{p+1}{3(d-pk+1)-m} .
\end{align*}
We note that $N_{m,n,k}(p,d) >0$ for non-negative integers $p, d$ if and only if $d-pk+1 \geq p$ and $p+1 \geq 3(d-pk+1)-m \geq 0 $, namely
\begin{align*}
pk-1 +\max \left\{p, \frac{m}{3} \right\} \leq d \leq pk+\frac{m+p-2}{3} .
\end{align*}
The above inequality implies that $p \leq \frac{m+1}{2}$. So, it follows that
\begin{align*}
X_{m,n,k}= 
\bigvee_{0 \leq p \leq \frac{m+1}{2}} \left( \bigvee_{pk -1 +\max \left\{p, \frac{m}{3} \right\} \leq d \leq pk+\frac{m+p-2}{3}}
\left( {\bigvee}_{N_{m,n,k}(p,d)} \sphere{d} \right) \right) .
\end{align*}

In order to complete the proof, it is sufficient to show that $I(\lex{L_m}{H}) \simeq X_{m,n,k}$ for $m \geq 4$. First, the explicit descriptions of $X_{1,n,k}$, $X_{2,n,k}$ and $X_{3,n,k}$ are obtained as follows.
\begin{align*}
X_{1,n,k} = &\bigvee_{0 \leq p \leq 1} \left( \bigvee_{pk-1+ \max \left\{ p, \frac{1}{3} \right\} \leq d \leq pk+\frac{1+p-2}{3}}
\left( {\bigvee}_{N_{1,n,k}(p,d)} \sphere{d} \right) \right) \\
= &\bigvee_{p=0,1} \left( \bigvee_{pk-1+ \max \left\{ p, \frac{1}{3} \right\}  \leq d \leq pk+\frac{p-1}{3}}
\left( {\bigvee}_{N_{1,n,k}(p,d)} \sphere{d} \right) \right) \\
= &\left( \bigvee_{-\frac{2}{3} \leq d \leq -\frac{1}{3}} \left( {\bigvee}_{N_{1,n,k}(0,d)} \sphere{d} \right) \right)
\vee \left( \bigvee_{k \leq d \leq k} \left( {\bigvee}_{N_{1,n,k}(1,d)} \sphere{d} \right) \right) \\
= & {\bigvee}_{N_{1,n,k}(1,k)} \sphere{k}  \\
= & {\bigvee}_{n^1 \binom{1}{1} \binom{2}{2}} \sphere{k} \\
= & {\bigvee}_n \sphere{k} .
\end{align*}
\begin{align*}
X_{2,n,k}
= &\bigvee_{0 \leq p \leq \frac{3}{2}} \left( \bigvee_{pk-1+ \max \left\{ p, \frac{2}{3} \right\} \leq d \leq pk+\frac{2+p-2}{3}}
\left( {\bigvee}_{N_{2,n,k}(p,d)} \sphere{d} \right) \right) \\
= &\bigvee_{p=0,1} \left( \bigvee_{pk-1+ \max \left\{ p, \frac{2}{3} \right\}  \leq d \leq pk+\frac{p}{3}}
\left( {\bigvee}_{N_{2,n,k}(p,d)} \sphere{d} \right) \right) \\
=&\left( \bigvee_{-\frac{1}{3} \leq d \leq 0} \left( {\bigvee}_{N_{2,n,k}(0,d)} \sphere{d} \right) \right)
\vee \left( \bigvee_{k \leq d \leq k+\frac{1}{3}} \left( {\bigvee}_{N_{2,n,k}(1,d)} \sphere{d} \right) \right) \\
= &\left( {\bigvee}_{N_{2,n,k}(0,0)} \sphere{0} \right) \vee \left( {\bigvee}_{N_{2,n,k}(1,k)} \sphere{k} \right) \\
=&\left( {\bigvee}_{n^0 \binom{1}{0} \binom{1}{1}} \sphere{0} \right) \vee \left( {\bigvee}_{n^1 \binom{1}{1} \binom{2}{1}} \sphere{k} \right) \\
=&\sphere{0} \vee \left( {\bigvee}_{2n} \sphere{k} \right).
\end{align*}
\begin{align*}
X_{3,n,k}
= &\bigvee_{0 \leq p \leq 2} \left( \bigvee_{pk-1+ \max \left\{ p, 1 \right\} \leq d \leq pk+\frac{3+p-2}{3}}
\left( {\bigvee}_{N_{3,n,k}(p,d)} \sphere{d} \right) \right) \\
=&\bigvee_{p=0,1,2} \left( \bigvee_{pk-1+ \max \left\{ p, 1 \right\} \leq d \leq pk+\frac{p+1}{3}}
\left( {\bigvee}_{N_{3,n,k}(p,d)} \sphere{d} \right) \right) \\
= &\left( \bigvee_{0 \leq d \leq \frac{1}{3}} \left( {\bigvee}_{N_{3,n,k}(0,d)} \sphere{d} \right) \right)
\vee \left( \bigvee_{k\leq d \leq k+\frac{2}{3}} \left( {\bigvee}_{N_{3,n,k}(1,d)} \sphere{d} \right) \right) \\
&\ \vee \left( \bigvee_{2k+1 \leq d \leq 2k+1} \left( {\bigvee}_{N_{3,n,k}(2,d)} \sphere{d} \right) \right) \\
= &\left( {\bigvee}_{N_{3,n,k}(0,0)} \sphere{0} \right) \vee \left( {\bigvee}_{N_{3,n,k}(1,k)} \sphere{k} \right) \\
&\ \vee \left( {\bigvee}_{N_{3,n,k}(2,2k+1)} \sphere{2k+1} \right) \\
= &\left( {\bigvee}_{n^0 \binom{1}{0} \binom{1}{0}} \sphere{0} \right) \vee \left( {\bigvee}_{n^1 \binom{1}{1} \binom{2}{0}} \sphere{k} \right)
 \vee \left( {\bigvee}_{n^2 \binom{2}{2} \binom{3}{3}} \sphere{2k+1} \right) \\
= &\sphere{0} \vee \left( {\bigvee}_n \sphere{k} \right) \vee \left( {\bigvee}_{n^2} \sphere{2k+1} \right).
\end{align*}
We next show that 
\begin{align}
\label{X recursive}
X_{m+3,n,k} = \Sigma X_{m,n,k} \vee \left( {\bigvee}_n \Sigma^{k+1} X_{m,n,k} \right) \vee \left( {\bigvee}_n \Sigma^{k+1} X_{m+1,n,k} \right).
\end{align}
We have
\begin{align*}
&\sum_{p \geq 0} \left(N_{m,n,k}(p,d-1) + n \cdot N_{m,n,k}(p,d-k-1) +n \cdot N_{m+1,n,k}(p,d-k-1) \right) \\
=&\sum_{p \geq 0} \left( n^p \binom{(d-1)-pk+1}{p} \binom{p+1}{3((d-1)-pk+1)-m} \right. \\
&\ + n^{p+1} \binom{(d-k-1)-pk+1}{p} \binom{p+1}{3((d-k-1)-pk+1)-m} \\
&\ \left. + n^{p+1} \binom{(d-k-1)-pk+1}{p} \binom{p+1}{3((d-k-1)-pk+1)-(m+1)} \right) \\
=&\sum_{p \geq 0} \left( n^p \binom{d-pk}{p} \binom{p+1}{3(d-pk)-m} \right. \\
&\ +n^{p+1}\binom{d-(p+1)k}{p} \binom{p+1}{3(d-(p+1)k)-m} \\
&\ \left. +n^{p+1} \binom{d-(p+1)k}{p} \binom{p+1}{3(d-(p+1)k)-(m+1)} \right) \\
=&\sum_{p \geq 0} n^p \binom{d-pk}{p} \binom{p+1}{3(d-pk)-m} \\
&\ +\sum_{p \geq 0} n^{p+1} \binom{d-(p+1)k}{p} \binom{p+2}{3(d-(p+1)k)-(m+1)} \\
=&\sum_{p \geq 0 } n^p \binom{d-pk}{p} \binom{p+1}{3(d-pk)-m} \\
&\ +\sum_{q=p+1 \geq 1 } n^q \binom{d-qk}{q-1} \binom{q+1}{3(d-qk)-m} \\
=&\sum_{p \geq 0} n^p \binom{d-pk+1}{p} \binom{p+1}{3(d-pk)-m} \\
=&\sum_{p \geq 0} N_{m+3,k}(p,d) .
\end{align*}
So, we conclude that
\begin{align*}
&\Sigma X_{m,n,k} \vee \left( {\bigvee}_n \Sigma^{k+1} X_{m,n,k} \right) \vee \left( {\bigvee}_n \Sigma^{k+1} X_{m+1,n,k} \right)\\
= &\bigvee_{d \geq 0} \left( \bigvee_{p \geq 0} \left( {\bigvee}_{N_{m,n,k}(p,d-1) + n \cdot N_{m,n,k}(p,d-k-1) +n \cdot N_{m+1,n,k}(p,d-k-1)} \sphere{d} \right) \right) \\
= &\bigvee_{d \geq 0} \left( {\bigvee}_{\sum_{p \geq 0} \left(N_{m,n,k}(p,d-1) + n \cdot N_{m,n,k}(p,d-k-1) +n \cdot N_{m+1,n,k}(p,d-k-1) \right)  } \sphere{d} \right) \\
= &\bigvee_{d \geq 0} \left( {\bigvee}_{\sum_{p \geq 0} N_{m+3,n,k}(p,d)} \sphere{d} \right) \\
= &\bigvee_{d \geq 0} \left( \bigvee_{p \geq 0} \left( {\bigvee}_{N_{m+3,n,k}(p,d)} \sphere{d} \right) \right) \\
=&X_{m+3,n,k} 
\end{align*}
as desired.

Now, we are ready to finish the proof by induction on $m$. By Lemma \ref{disjoint suspension}, we obtain 
\begin{align*}
\Sigma I(\lex{L_2}{H}) &\simeq \Sigma \left(\left( {\bigvee}_n \sphere{k} \right) \sqcup \left( {\bigvee}_n \sphere{k} \right) \right) \\
&\simeq \sphere{1} \vee \Sigma \left({\bigvee}_n \sphere{k} \right) \vee \Sigma \left( {\bigvee}_n \sphere{k} \right) \\
&\simeq \sphere{1} \vee \left({\bigvee}_n \sphere{k+1} \right) \vee \left( {\bigvee}_n \sphere{k+1} \right) \\
&=\sphere{1} \vee \left({\bigvee}_{2n} \sphere{k+1} \right), \\
\Sigma I(\lex{L_3}{H}) &\simeq \Sigma \left(\left( {\bigvee}_n \sphere{k} \right) \sqcup  \left( {\bigvee}_{n^2} \sphere{2k+1} \right) \right) \\
&\simeq \sphere{1} \vee \Sigma \left({\bigvee}_n \sphere{k} \right) \vee \Sigma \left( {\bigvee}_{n^2} \sphere{2k+1} \right) \\
&\simeq \sphere{1} \vee \left({\bigvee}_n \sphere{k+1} \right) \vee \left( {\bigvee}_{n^2} \sphere{2k+2} \right) .
\end{align*}
So, it follows that
\begin{align*}
\Sigma I(\lex{L_m}{H}) \simeq \Sigma X_{m,n,k}
\end{align*}
for $m =1,2,3$. Assume that $\Sigma I(\lex{L_r}{H}) \simeq \Sigma X_{r,n,k}$ and $\Sigma I(\lex{L_{r+1}}{H}) \simeq \Sigma X_{r+1,n,k}$ for some $r \geq 1$. By recursive relations (\ref{Lm recursive}) and (\ref{X recursive}), we have
\begin{align*}
&I(\lex{L_{r+3}}{H}) \\
\simeq &\Sigma I(\lex{L_r}{H}) \vee  \left( {\bigvee}_n \Sigma^{k+1} I(\lex{L_r}{H}) \right) \vee \left( {\bigvee}_n \Sigma^{k+1} I(\lex{L_{r+1}}{H}) \right) \\
\simeq &\Sigma X_{r,n,k} \vee \left( {\bigvee}_n \Sigma^{k+1} X_{r,n,k} \right) \vee \left( {\bigvee}_n \Sigma^{k+1} X_{r+1,n,k} \right) \\
=&X_{r+3,n,k}.
\end{align*}
Therefore, we obtain that $I(\lex{L_m}{H}) \simeq X_{m,n,k}$ for any $m \geq 4$ by induction. This is the desired conclusion.
\end{proof}

\begin{example}
Kozlov \cite[Proposition 5.2]{Kozlov99} proved that
\begin{align*}
I(C_n) &\simeq \left\{
\begin{aligned}
&\sphere{k - 1} \vee \sphere{k - 1}  & &(n =3k), \\
&\sphere{k-1} & &(n =3k+1), \\
&\sphere{k} & &(n =3k+2)  .
\end{aligned} \right. 
\end{align*}
Therefore, we can determine the homotopy types of $I(\lex{L_m}{C_n})$ for any $m \geq 1$ and $n \geq 3$ by Theorem \ref{line theorem}.
\end{example}

Recall that the homological connectivity of a space $X$, denoted by $\mathrm{conn}_H(X)$, is defined by
\begin{align*}
\mathrm{conn}_H(X)= \left\{
\begin{aligned}
&-2 & &(X = \emptyset), \\
&k & &(\widetilde{H}_i (X)=0 \text{ for any $i \leq k$, } \widetilde{H}_{k+1} (X) \neq 0 ),  \\
&\infty & &(\widetilde{H}_i (X) = 0 \text{ for any $i$ }),
\end{aligned} \right.
\end{align*}
where $\widetilde{H}_i (X)$ is the reduced $i$th homology group of $X$.
Though Theorem \ref{line theorem} completely determines the homotopy type of $I(\lex{L_m}{H})$ with $I(H) \simeq {\bigvee}_n \sphere{k}$, it is hard to obtain the homological connectivity of $I(\lex{L_m}{H})$ immediately from Theorem \ref{line theorem}. Here we compute the homological connectivity of $I(\lex{L_m}{H})$ as a corollary.  
\begin{corollary}
\label{line corollary}
Let $H$ be a graph such that $I(H) \simeq {\bigvee}_n \sphere{k}$ with $n \geq 1$, $k \geq 0$. Then we have
\begin{align*}
\mathrm{conn}_H(I(\lex{L_{3l+i}}{H})) = \left\{
\begin{aligned}
&l-2 & &(i=0), \\
&k+l-1 & &(i=1), \\
&l -1& &(i=2).
\end{aligned} \right.
\end{align*}
\end{corollary}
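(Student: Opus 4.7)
The proof is a direct reading of Theorem \ref{line theorem}. In each case, that theorem describes $I(\lex{L_m}{H})$ as a (possibly disconnected) wedge of spheres, and the homological connectivity of such a space equals the minimum dimension $d_{\min}$ of a sphere occurring, minus one; when the space is a disjoint union of two nonempty pieces, the second component forces $\widetilde H_0 \neq 0$ and the connectivity is $-1$ irrespective of sphere dimensions.

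The small cases $m = 1, 2, 3$ fall out immediately. For $m = 1$ ($l = 0$, $i = 1$) the space is $\bigvee_n \sphere{k}$ with connectivity $k - 1 = k + l - 1$; for $m = 2$ ($l = 0$, $i = 2$) and $m = 3$ ($l = 1$, $i = 0$) the space is a disjoint union of two nonempty wedges of spheres, giving connectivity $-1$, which matches $l - 1$ and $l - 2$ respectively.

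For $m \geq 4$, the plan is to minimise over $p \geq 0$ the smallest integer $d$ satisfying $N_{m,n,k}(p,d) > 0$, that is,
\[
pk - 1 + \max\{p,\, m/3\} \leq d \leq pk + (m + p - 2)/3,
\]
and then to declare $\mathrm{conn}_H = d_{\min} - 1$. Splitting by the residue of $m$ modulo $3$: for $m = 3l$, taking $p = 0$ gives the interval $[l - 1,\, l - \tfrac{2}{3}]$ with integer solution $d = l - 1$, while any $p \geq 1$ raises the lower bound by at least $k \geq 0$, so $d_{\min} = l - 1$. For $m = 3l + 2$ with $l \geq 1$, taking $p = 0$ gives $d = l$, and higher $p$ do no better, so $d_{\min} = l$.

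The main obstacle is the case $m = 3l + 1$: at $p = 0$ the constraints become $[l,\, l - \tfrac{1}{3}]$, which contains no integer, so $p = 0$ contributes no sphere at all. One then shifts to $p = 1$, where the constraints collapse to $\{k + l\}$, and checks that for $p \geq 2$ the lower bound satisfies $d \geq pk + l \geq k + l$ using $k \geq 0$; hence $d_{\min} = k + l$, yielding connectivity $k + l - 1$ and completing the argument.
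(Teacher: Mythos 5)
Your computation is correct, but it follows a genuinely different route from the paper's. The paper does not unwind the closed-form description in Theorem \ref{line theorem} at all: it recalls the recursive homotopy equivalence
\[
I(\lex{L_{m+3}}{H}) \simeq \Sigma I(\lex{L_m}{H}) \vee \Bigl( {\bigvee}_n \Sigma^{k+1} I(\lex{L_m}{H}) \Bigr) \vee \Bigl( {\bigvee}_n \Sigma^{k+1} I(\lex{L_{m+1}}{H}) \Bigr)
\]
from the proof of that theorem, deduces $\mathrm{conn}_H(I(\lex{L_{m+3}}{H})) = \min\bigl\{ \mathrm{conn}_H(\Sigma I(\lex{L_m}{H})),\ \mathrm{conn}_H(\Sigma^{k+1} I(\lex{L_{m+1}}{H})) \bigr\}$ (the middle summand being at least as connected as the first since $k \geq 0$), and closes by induction from the base values $k-1, -1, -1$ at $m = 1,2,3$. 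You instead take the statement of Theorem \ref{line theorem} at face value and minimize the sphere dimension $d$ over the admissible pairs $(p,d)$ with $N_{m,n,k}(p,d) > 0$; this is a legitimate, self-contained derivation, and your case analysis is sound --- in particular you correctly identify that for $m = 3l+1$ the $p=0$ constraint range contains no integer, so the minimum jumps to $d = k+l$ at $p=1$, which is exactly why that residue class behaves differently. (One minor slip: for $m = 3l+1$ and $p=0$ the interval is $[\,l - \tfrac{2}{3},\ l - \tfrac{1}{3}\,]$, not $[\,l,\ l - \tfrac{1}{3}\,]$; either way it contains no integer, so nothing is affected.) The trade-off is that the paper's recursion avoids all manipulation of the binomial-coefficient support conditions and is shorter, while your direct reading doubles as a consistency check on the explicit formula for $N_{m,n,k}(p,d)$ and does not require re-entering the proof of Theorem \ref{line theorem}.
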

\begin{proof}
Recall from the proof of Theorem \ref{line theorem} that there is a recursive relation
\begin{align*}
&I(\lex{L_{m+3}}{H}) \\
\simeq &\Sigma I(\lex{L_m}{H}) \vee  \left( {\bigvee}_n \Sigma^{k+1} I(\lex{L_m}{H}) \right) \vee \left( {\bigvee}_n \Sigma^{k+1} I(\lex{L_{m+1}}{H}) \right).
\end{align*}
So, we obtain
\begin{align*}
&\mathrm{conn}_H (I(\lex{L_{m+3}}{H})) \\
= &\min \left\{ \mathrm{conn}_H(\Sigma  I(\lex{L_m}{H})), \mathrm{conn}_H(\Sigma^{k+1} I(\lex{L_{m+1}}{H})) \right\} .
\end{align*}
The base cases are
\begin{align*}
\mathrm{conn}_H (I(\lex{L_1}{H})) &= \mathrm{conn}_H \left({\bigvee}_n \sphere{k} \right) =k-1, \\
\mathrm{conn}_H (I(\lex{L_2}{H})) &= \mathrm{conn}_H \left(\left( {\bigvee}_n \sphere{k} \right) \sqcup \left( {\bigvee}_n \sphere{k} \right) \right) = -1, \\
\mathrm{conn}_H (I(\lex{L_3}{H})) &= \mathrm{conn}_H \left(\left( {\bigvee}_n \sphere{k} \right) \sqcup  \left( {\bigvee}_{n^2} \sphere{2k+1} \right) \right) = -1. 
\end{align*}
Therefore, we can prove the corollary by induction.
\end{proof}

We move on to the second example.
We denote the complete graph on $n$ vertices by $K_n$. For $n \geq 2$, it is obvious that 
\begin{align*}
I(K_n) = {\bigvee}_{n-1} \sphere{0}.
\end{align*}
As the second example in this section, we show that the homological connectivity of $I(\lex{G}{K_n})$ for any forest $G$ is determined by the {\it independent domination number} of $G$ when $n \geq 2$. For a graph $G$ and a subset $S \subset V(G)$, $S$ is a {\it dominating set} of $G$ if $V(G) = \bigcup_{u \in S} \neib{G}{u}$.
The domination number $\gamma (G)$ of $G$ is the minimum cardinality of a dominating set of $G$. The relationship between the domination number of $G$ and the homological connectivity of $I(G)$ was argued by Meshulam \cite{Meshulam03}, who proved that for a chordal graph $G$, $i < \gamma(G)$ implies $\widetilde{H}_{i-1} (I(G)) =0$  (\cite[Theorem 1.2 (iii)]{Meshulam03}). This is equivalent to state that $\mathrm{conn}_H (I(G)) \geq \gamma(G) -2$. 
This theorem can be used to deduce a result of Aharoni, Berger and Ziv \cite{AharoniBergerZiv02}.

A dominating set $S$ of $G$ is called {\it an independent dominating set} if $S$ is an independent set. The independent domination number $i (G)$ is the minimum cardinality of an independent dominating set of $G$. It is obvious that $i(G) \geq \gamma(G)$ since an independent dominating set is a dominating set. 
\begin{theorem}
\label{connectivity and domination}
Let $G$ be a forest. Then, for any $n \geq 2$, we have
\begin{align}
\label{domination}
\mathrm{conn}_H (I(\lex{G}{K_n})) = i (G) -2.
\end{align}
\end{theorem}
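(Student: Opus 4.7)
The plan is to induct on $|V(G)|$, using the splitting of Theorem~\ref{splitting} to turn the left-hand side into a recursion that mirrors a standard tree recursion for the independent domination number on the right. The key numerical input is $\mathrm{conn}_H(I(K_n)) = -1$ (since $I(K_n) \simeq \bigvee_{n-1} \sphere{0}$ for $n \geq 2$), combined with the standard identities $\mathrm{conn}_H(\Sigma X) = \mathrm{conn}_H(X)+1$ and $\mathrm{conn}_H(X * Y) = \mathrm{conn}_H(X) + \mathrm{conn}_H(Y) + 2$. In particular, both suspending and joining with $I(K_n)$ increase homological connectivity by exactly one.

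The base and degenerate cases handle themselves: if $V(G) = \emptyset$ both sides equal $-2$; if $|V(G)| = 1$ then $I(\lex{G}{K_n}) \simeq \bigvee_{n-1} \sphere{0}$ has connectivity $-1 = 1-2 = i(G) - 2$; and if $G$ has only isolated vertices, then $I(\lex{G}{K_n})$ is the $|V(G)|$-fold join of $I(K_n)$, yielding connectivity $|V(G)| - 2 = i(G) - 2$. For the inductive step I pick a leaf $w$ of $G$ with unique neighbor $v$. If $G \setminus \neib{G}{v} = \emptyset$, then $G$ is a star on at least two vertices and Theorem~\ref{splitting} writes $I(\lex{G}{K_n})$ as a disjoint union of two non-empty spaces, so the connectivity is $-1$, matching $i(G) - 2 = 1 - 2$. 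Otherwise, setting $G_1 = G \setminus \neib{G}{v}$ and $G_2 = G \setminus \{v, w\}$, all three summands in the wedge given by Theorem~\ref{splitting} are path-connected (suspensions of non-empty complexes and joins of non-empty complexes are connected), so additivity of $\widetilde{H}_*$ on a wedge of connected complexes yields
\[
\mathrm{conn}_H(I(\lex{G}{K_n})) = \min\bigl\{\mathrm{conn}_H(I(\lex{G_1}{K_n})),\ \mathrm{conn}_H(I(\lex{G_2}{K_n}))\bigr\} + 1.
\]

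The inductive hypothesis therefore reduces the theorem to the combinatorial identity
\[
i(G) = \min\bigl\{i(G_1),\ i(G_2)\bigr\} + 1.
\]
For the $\leq$ direction, enlarging a minimum independent dominating set of $G_1$ by $v$, or of $G_2$ by $w$, yields an independent dominating set of $G$. For the reverse, take a minimum such $S \subseteq V(G)$ and split on whether $v \in S$. If $v \in S$, independence forces $S \setminus \{v\} \subseteq V(G_1)$, and it still dominates $G_1$ because every vertex of $G_1$ is dominated in $G$ by some element other than $v$ (the closed neighborhood of $v$ is disjoint from $V(G_1)$). If $v \notin S$, then $w$ must itself lie in $S$ (its only neighbor, $v$, is absent), and $S \setminus \{w\} \subseteq V(G_2)$ dominates $G_2$ because $w$'s only neighbor $v$ is not in $V(G_2)$. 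Either way $\min\{i(G_1), i(G_2)\} \leq |S| - 1 = i(G) - 1$.

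The main obstacle is not any single calculation but careful bookkeeping across the degenerate cases: the empty forest, isolated-vertex components, and the star case (where Theorem~\ref{splitting} produces a disjoint union rather than a wedge, and the connectivity formula for a wedge no longer applies directly). The argument should be organized so these are absorbed into the base of the induction, leaving only the clean wedge case $G_1 \neq \emptyset$ for the recursive step.
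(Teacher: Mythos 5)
Your proposal is correct and follows essentially the same route as the paper: induction on $|V(G)|$ via Theorem~\ref{splitting}, with the degenerate star and edgeless cases absorbed into the base, the connectivity recursion $\mathrm{conn}_H(I(\lex{G}{K_n})) = \min\{\mathrm{conn}_H(I(\lex{G_1}{K_n})), \mathrm{conn}_H(I(\lex{G_2}{K_n}))\}+1$, and the identity $i(G)=\min\{i(G_1),i(G_2)\}+1$ proved by the same case split on whether a minimum independent dominating set contains $v$ or $w$. The only (cosmetic) difference is that you invoke general connectivity formulas for suspensions and joins where the paper first rewrites the joins with $I(K_n)=\bigvee_{n-1}S^0$ as wedges of suspensions.
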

\begin{proof}
We first consider two cases.
\begin{itemize}
\item If $G \setminus \neib{G}{v} = \emptyset$ for some $v \in V(G)$, then we have $i(G) = 1$ and 
\begin{align*}
\mathrm{conn}_H (I(\lex{G}{K_n})) &= \mathrm{conn}_H \left(  \left({\bigvee}_{n-1} \sphere{0} \right) \sqcup \left( {\bigvee}_{(n-1)^{|V(G)| -1} } \sphere{|V(G)|-2} \right) \right) \\
&=-1
\end{align*} 
by Theorem \ref{splitting}.
\item If $G$ has no edges, then we have $i (G) = |V(G)|$ and 
\begin{align*}
\mathrm{conn}_H (I(\lex{G}{K_n})) &=\mathrm{conn}_H \left(  {\bigvee}_{(n-1)^{|V(G)| } } \sphere{|V(G)|-1} \right) \\
&=|V(G)|-2.
\end{align*}
\end{itemize}
Therefore, equation (\ref{domination}) holds in these two cases. 

We prove the theorem by induction on $|V(G)|$. Since $L_1$, $L_2$ and $L_1 \sqcup L_1$ are included in the above two cases, equation (\ref{domination}) holds for $G$ such that $|V(G)| \leq 2$.
Assume that (\ref{domination}) holds for any forest $G'$ such that $|V(G')| \leq r$ with $r \geq 2$. Let $G$ be a forest such that $|V(G)|=r+1$ and there exists $v, w \in V(G)$ such that $N_G (w) = \{v\}$ and $G \setminus \neib{G}{v} \neq \emptyset$.
By Theorem \ref{splitting}, we obtain
\begin{align*}
I(\lex{G}{K_n}) \simeq &\Sigma I(\lex{(G \setminus \neib{G}{v} )}{K_n}) \vee \left(I(\lex{(G \setminus \neib{G}{v} )}{K_n}) * \left( {\bigvee}_{n-1} \sphere{0} \right) \right) \\
&\ \vee \left(I(\lex{(G \setminus\{v, w\})}{K_n}) * \left( {\bigvee}_{n-1} \sphere{0} \right) \right)\\
\simeq &\Sigma I(\lex{(G \setminus \neib{G}{v} )}{K_n}) \vee \left({\bigvee}_{n-1} \Sigma I(\lex{(G \setminus \neib{G}{v} )}{K_n})  \right) \\
&\ \vee \left({\bigvee}_{n-1} \Sigma I(\lex{(G \setminus\{v, w\})}{K_n}) \right) \\
= &\left({\bigvee}_{n} \Sigma I(\lex{(G \setminus \neib{G}{v} )}{K_n})  \right) \vee \left({\bigvee}_{n-1} \Sigma I(\lex{(G \setminus\{v, w\})}{K_n}) \right).
\end{align*}
Hence, we get
\begin{align*}
&\mathrm{conn}_H (I(\lex{G}{K_n})) \\
= &\min \left\{ \mathrm{conn}_H (I(\lex{(G \setminus \neib{G}{v} )}{K_n})) +1,
\mathrm{conn}_H (I(\lex{(G \setminus\{v, w\})}{K_n})) +1 \right\} . 
\end{align*}
$G \setminus \neib{G}{v}$ and $G \setminus \{v, w\}$ are the forests which satisfy $|V(G \setminus \neib{G}{v})| \leq r-1$, $|V(G \setminus \{v, w\})| \leq r-1$. So, by the assumption of induction, we get
\begin{align*}
\mathrm{conn}_H (I(\lex{G}{K_n})) 
= &\min \left\{ i(G \setminus \neib{G}{v}) -1 , i(G \setminus \{v,w\}) -1 \right\}.
\end{align*}
Here, we have $i(G \setminus \neib{G}{v}) \geq i(G) -1$. It is because if there exists an independent dominating set $S$ of $G \setminus \neib{G}{v}$ with $|S| < i(G) - 1$, then $S \cup \{v\}$ is an independent dominating set of $G$ such that $|S \cup \{u\}| < i(G)$, a contradiction. For the same reason, we also have $i(G \setminus \{v, w \}) \geq i(G) -1$. 

An independent dominating set of $G$ must contain either $v$ or $w$ since $N_G (w) =\{v\}$. If there exists an independent dominating set $S$ of $G$ such that $|S| = i(G)$ and $v \in S$, then $S'=S \setminus \{v\}$ is an independent dominating set of $G \setminus \neib{G}{v}$ with $|S'|=i(G) -1$ since $S \cap \neib{G}{v} = \{v\}$. Thus, in this case, we obtain $i(G \setminus \neib{G}{v}) = i(G) -1$. If there exists an independent dominating set $S$ of $G$ such that $|S| = i(G)$ and $w \in S$, then $S'' = S \setminus \{w\}$ is an independent dominating set of $G \setminus \{v, w\}$ with $|S''|=i(G) -1$ since $v \notin S$. So, in this case, we get $i(G \setminus \{v, w \}) = i(G) -1$.

Above argument shows that 
\begin{align*}
\min \left\{ i(G \setminus \neib{G}{v}) -1 , i(G \setminus \{v,w\}) -1 \right\} = i(G) -2.
\end{align*}
Therefore, equation (\ref{domination}) holds for $G$. By induction, we get the desired conclusion.
\end{proof}

\end{document}